\begin{document}

%

\markboth{Yogesh J. Bagul}{Stringent bounds for the non-zero Bernoulli numbers}

\title{Stringent bounds for the non-zero Bernoulli numbers}

\author{Yogesh J. Bagul}

\address{Department of Mathematics\\ K. K. M. College, Manwath\\ Dist: Parbhani (M. S.) - 431505, India
	}
\email{yjbagul@gmail.com}

\maketitle
\begin{abstract}
We present new sharper lower and upper bounds for the non-zero Bernoulli numbers using Euler's formula for the Riemann zeta function. In particular, we determine the best possible constants $ \alpha $ and $ \beta $ such that the double inequality
$$ \frac{2\cdot (2k)!}{\pi^{2k} (2^{2k}-1)}\frac{3^{2k}}{(3^{2k}-\alpha)} < \vert B_{2k} \vert < \frac{2\cdot (2k)!}{\pi^{2k} (2^{2k}-1)}\frac{3^{2k}}{(3^{2k}-\beta)}, $$
holds for $ k = 1, 2, 3, \cdots.$ Our main results refine the existing bounds of $ \vert B_{2k} \vert $ in the literature.
\end{abstract}
\subjclass{11B68, 11M06, 26D99}  
\keywords{Bernoulli number, Riemann zeta function, Stirling formula, lower-upper bounds.}        


\vspace{10pt}

\section{Introduction}\label{sec1}
The classical Bernoulli numbers frequently occur in mathematical analysis and other branches of mathematics and science. These fascinating numbers which are a sequence of rational numbers were discovered by the Swiss mathematician Jacob Bernoulli. They are denoted by $ B_k $ and may be defined by (\cite[p. 804]{abramowitz}, \cite[p. 3]{temme})
$$ \frac{x}{e^x - 1} = \sum_{k=0}^\infty B_k \frac{x^k}{k!},  \, \, \vert x \vert < 2\pi. $$
 On the one hand, all the odd-indexed Bernoulli numbers $B_{2k+1} , \, \,  k \in \mathbb{N}$ are zero except $ B_1 = -1/2$. On the other hand, all the even-indexed Bernoulli numbers $B_{2k}$ are non-zero and they alternate in signs, i.e., for $ k= 1, 2, 3, 4, 5, 6, 7, 8, \cdots$, the numbers $ B_{2k} $ are respectively given by $1/6, -1/30, 1/42, -1/30, 5/66, -691/2730, 7/6, -3617/510, \cdots.$ Hence, $$ \vert B_{2k} \vert = (-1)^{k+1} B_{2k}, \, \, k = 1, 2, 3, \cdots. $$
The bounds for even-indexed Bernoulli numbers play a vital role in the theory of inequalities and thus bounding these Bernoulli numbers has been the topic of interest.

The double inequality 
\begin{align}\label{eqn1.1}
\frac{2 \cdot(2k)!}{(2 \pi)^{2k}} < \vert B_{2k} \vert < \frac{2 \cdot(2k)!}{(2 \pi)^{2k}} \cdot \frac{2^{2k-1}}{(2^{2k-1}-1)} = \frac{(2k)!}{\pi^{2k}(2^{2k-1}-1)}; \, \, k = 1, 2, \cdots
\end{align}
 appeared in \cite[p. 805; 23.1.15]{abramowitz}. In 1989, D. J. Leeming \cite[p. 128]{leeming} established that 
\begin{align}\label{eqn1.2}
4 \sqrt{\pi k} \left( \frac{k}{\pi e} \right)^{2k} < \vert B_{2k} \vert < 5 \sqrt{\pi k}\left( \frac{k}{\pi e} \right)^{2k}; \, \, k \geq 2.
\end{align}
As stated in \cite{daniello} the left inequality of (\ref{eqn1.2}) was already established by A. Laforgia \cite[p. 2]{laforgia} in 1980. By utilizing Fourier representation, 
\begin{align}\label{eqn1.3}
B_{2k} = \frac{(-1)^{k-1} \cdot 2 \cdot (2k)!}{(2\pi)^{2k}} \sum_{n=1}^\infty \frac{1}{n^{2k}},
\end{align}
 C. D'Aniello \cite{daniello} obtained 
\begin{align}\label{eqn1.4}
\frac{2 \cdot(2k)!}{(2 \pi)^{2k}} \left( 1 + \frac{1}{2^{2k}} \right) < \vert B_{2k} \vert ; \, \, k = 1, 2, \cdots.
\end{align}
Further, using the Stirling formula \cite{abramowitz}, the inequality 
\begin{align}\label{eqn1.5}
4 \sqrt{\pi k} \left( \frac{k}{\pi e} \right)^{2k} \left( 1 + \frac{1}{2^{2k}} \right) < \vert B_{2k} \vert ; \, \, k = 1, 2, \cdots
\end{align}
was also obtained in \cite{daniello}. In the same paper \cite{daniello}, a relation \cite[p. 807]{abramowitz}
\begin{align}\label{eqn1.6}
\vert B_{2k} \vert = \frac{2 \cdot (2k)!}{\pi^{2k}} \frac{1}{(2^{2k}-1)}\sum_{n=0}^\infty \frac{1}{(2n+1)^{2k}} ; \, \, k = 1, 2, \cdots
\end{align}
was used to establish 
\begin{align}\label{eqn1.7}
\frac{2 \cdot (2k)!}{\pi^{2k}} \frac{1}{(2^{2k}-1)} < \vert B_{2k} \vert ; \, \, k = 1, 2, \cdots.
\end{align}
Among all the lower bounds of $ \vert B_{2k} \vert $ listed above, the lower bound in (\ref{eqn1.7}) is stringent. Thus, by combining (\ref{eqn1.1}) and (\ref{eqn1.7}) we have 
\begin{align}\label{eqn1.8}
\frac{2 \cdot(2k)!}{\pi^{2k}(2^{2k}-1)} < \vert B_{2k} \vert <  \frac{(2k)!}{\pi^{2k}(2^{2k-1}-1)}; \, \, k = 1, 2, \cdots.
\end{align}
For $ k \in \mathbb{N} $ (the set of positive integers), H. Alzer \cite{alzer} showed that the constants $ \theta = 0 $ and $ \delta = 2 + \frac{\ln(1-6/\pi^2)}{\ln 2} $ such that 
\begin{align}\label{eqn1.9}
\frac{2 \cdot (2k)!}{(2\pi)^{2k}} \frac{1}{1-2^{\theta - 2k}} \leq \vert B_{2k} \vert \leq \frac{2 \cdot (2k)!}{(2\pi)^{2k}} \frac{1}{1-2^{\delta - 2k}}
\end{align}
are the best possible. The upper bound in (\ref{eqn1.9}) is sharper than that in (\ref{eqn1.8}). Another upper bound for $ \vert B_{2k} \vert $ was given by H.-F. Ge \cite{ge} as follows:
\begin{align}\label{eqn1.10}
\vert B_{2k} \vert \leq \frac{2(2^{2n}-1)}{2^{2n}} \zeta(2n) \frac{(2k)!}{\pi^{2k}(2^{2k}-1)}; \, \,  k \geq n \, \, \text{and} \, \, n \in \mathbb{N},
\end{align}
where $ \zeta $ is the Riemann zeta function.

 The upper bound in (\ref{eqn1.8}) has been nicely sharpened, viz. in (\ref{eqn1.9}). However, to the best of the author's knowledge and belief, the lower bound in (\ref{eqn1.8}) is not presented in its refined form. In fact, from (\ref{eqn1.3}) and (\ref{eqn1.6}) we immediately have the following proposition.
\begin{proposition}\label{prop1.1}
\begin{itemize}
\item[(i)] For $ m, k \in \mathbb{N}, $ we have
\begin{align}\label{eqn1.11}
\frac{ 2 \cdot (2k)!}{(2\pi)^{2k}} \sum_{n=1}^m \frac{1}{n^{2k}} < \vert B_{2k} \vert.
\end{align} 
\item[(ii)] For $ m = 0, 1, 2, \cdots $ and $ k \in \mathbb{N} $ we have 
\begin{align}\label{eqn1.12}
\frac{2 \cdot (2k)!}{\pi^{2k}} \frac{1}{(2^{2k}-1)}\sum_{n=0}^m \frac{1}{(2n+1)^{2k}} <  \vert B_{2k} \vert.
\end{align}
\end{itemize} 
\end{proposition}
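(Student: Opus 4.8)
The plan is to observe that both parts follow at once by truncating a convergent series of \emph{positive} terms, so the only point needing attention is the strictness of the resulting inequality. First I would combine the sign relation $|B_{2k}| = (-1)^{k+1}B_{2k}$ with D'Aniello's identity \eqref{eqn1.3} to obtain the exact evaluation (Euler's formula)
\begin{align*}
|B_{2k}| \;=\; \frac{2\cdot(2k)!}{(2\pi)^{2k}}\sum_{n=1}^\infty \frac{1}{n^{2k}} \;=\; \frac{2\cdot(2k)!}{(2\pi)^{2k}}\,\zeta(2k).
\end{align*}
For part (i) I would then split the series at the index $m$, writing $\sum_{n=1}^\infty n^{-2k} = \sum_{n=1}^m n^{-2k} + \sum_{n=m+1}^\infty n^{-2k}$. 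Since $k\in\mathbb{N}$ forces $2k\ge 2$, every term of the tail is strictly positive and the tail is nonempty, hence $\sum_{n=1}^\infty n^{-2k} > \sum_{n=1}^m n^{-2k}$; multiplying this strict inequality by the positive constant $2\cdot(2k)!/(2\pi)^{2k}$ gives \eqref{eqn1.11}.

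For part (ii) I would argue in exactly the same way starting from \eqref{eqn1.6}, which yields $|B_{2k}| = \dfrac{2\cdot(2k)!}{\pi^{2k}(2^{2k}-1)}\displaystyle\sum_{n=0}^\infty (2n+1)^{-2k}$. Truncating this series of positive terms at $n=m$ discards the strictly positive, nonempty tail $\sum_{n=m+1}^\infty (2n+1)^{-2k}$, and since $2^{2k}-1>0$ the prefactor is positive, so the strict inequality \eqref{eqn1.12} follows.

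I do not expect any genuine obstacle: the whole content is already packaged in the two series representations \eqref{eqn1.3} and \eqref{eqn1.6}, and the only thing to verify is that every omitted term is positive, which is immediate because $2k$ is an even positive integer. I would simply note that the special cases $m=1$ in \eqref{eqn1.11} and $m=0$ in \eqref{eqn1.12} recover \eqref{eqn1.4} and \eqref{eqn1.7} respectively, while taking $m$ larger produces the progressively sharper lower bounds that the rest of the paper develops.
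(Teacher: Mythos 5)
Your proof is correct and is precisely the argument the paper intends: both parts follow by truncating the positive-term series representations \eqref{eqn1.3} and \eqref{eqn1.6} and noting that the discarded tail is strictly positive, which is all the paper itself offers (``we immediately have''). One small slip in your closing remark: it is $m=2$, not $m=1$, in \eqref{eqn1.11} that recovers \eqref{eqn1.4}, since $\sum_{n=1}^{2} n^{-2k} = 1 + 2^{-2k}$.
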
  
It is not difficult to prove that the lower bound in (\ref{eqn1.12}) is finer than that in (\ref{eqn1.11}) for $ m \geq 1. $ Putting $ m = 2 $ and $ m = 0 $ in (\ref{eqn1.11}) and (\ref{eqn1.12}) respectively we get the inequalities (\ref{eqn1.4}) and (\ref{eqn1.7}). To obtain sharper lower bounds, one should increase the value of $ m $ in (\ref{eqn1.11}) and (\ref{eqn1.12}). For example, by putting $ m=3 $ and $ m = 1$ in (\ref{eqn1.11}) and (\ref{eqn1.12}) respectively we get
\begin{align}\label{eqn1.13}
\frac{ 2 \cdot (2k)!}{(2\pi)^{2k}} \left(1 + \frac{1}{2^{2k}} + \frac{1}{3^{2k}}\right) < \vert B_{2k} \vert ; \, \, k = 1, 2, \cdots
\end{align}
and 
\begin{align}\label{eqn1.14}
\frac{2 \cdot (2k)!}{\pi^{2k}} \frac{1}{(2^{2k}-1)} \left( 1 + \frac{1}{3^{2k}} \right) <  \vert B_{2k} \vert ; \, \, k = 1, 2, \cdots.
\end{align}

Despite this, we will present more stringent and convincing lower bounds for $ \vert B_{2k} \vert $ in the next section. Our main aim is to refine the double inequality (\ref{eqn1.9}) by establishing sharp bounds for $ \vert B_{2k} \vert .$ 

\section{Main results}\label{sec2}
Let us first define $ \mathcal{P}_0 = \left\lbrace p_0 = 2, p_1 = 3, p_2 = 5, p_3 = 7, p_4 = 11, p_5 = 13, p_6 = 17, \cdots \right\rbrace, $ as the set of all positive prime integers and $ \mathcal{P} = \left\lbrace p_1 = 3, p_2 = 5, p_3 = 7, p_4 = 11, p_5 = 13, p_6 = 17, \cdots \right\rbrace $ as the set of all positive odd prime integers. 

We start with the following basic inequality which will motivate us to establish the main result of the paper.
\begin{proposition}\label{prop2.1}
The inequality  
\begin{align}\label{eqn2.1}
\frac{2 \cdot (2k)!}{\pi^{2k} (2^{2k}-1)} \prod_{n=1}^m \left(\frac{p_n^{2k}}{p_n^{2k}-1}\right) < \vert B_{2k} \vert , 
\end{align}
holds for $m, k \in \mathbb{N} $ and $p_n \in \mathcal{P}$.  
\end{proposition}
\begin{proof}
Using Euler's fabulous product formula \cite{gautschi} for the zeta function $\zeta$, we have 
$$ \zeta(2k) = \left(\frac{2^{2k}}{2^{2k}-1}\right) \prod_{n \in \mathbb{N}} \left(\frac{p_n^{2k}}{p_n^{2k}-1}\right), $$ where $ k \in \mathbb{N}, \, \, p_n \in \mathcal{P}. $ This implies 
$$ \zeta(2k) > \left(\frac{2^{2k}}{2^{2k}-1}\right) \prod_{n=1}^m \left(\frac{p_n^{2k}}{p_n^{2k}-1}\right). $$
It is given in \cite[p. 5, (1.14)]{temme} that 
$$ \vert B_{2k} \vert = \frac{2 \cdot (2k)!}{(2\pi)^{2k}} \zeta(2k), \, \, k \in \mathbb{N}. $$ Hence
$$ \frac{\pi^{2k}(2^{2k} - 1)}{2 \cdot (2k)!} \vert B_{2k} \vert > \prod_{n=1}^m \left(\frac{p_n^{2k}}{p_n^{2k}-1}\right), $$ which gives the desired result.
\end{proof}
In particular, for $ m = 1, 2, $ Proposition \ref{prop2.1} yields respectively the following inequalities:
\begin{align}\label{eqn2.2}
\frac{2 \cdot (2k)!}{\pi^{2k} (2^{2k} - 1)} \frac{3^{2k}}{(3^{2k}-1)} < \vert B_{2k} \vert , \, \, k \in \mathbb{N}
\end{align}
and
\begin{align}\label{eqn2.3}
\frac{2 \cdot (2k)!}{\pi^{2k} (2^{2k} - 1)} \frac{3^{2k}}{(3^{2k}-1)} \frac{5^{2k}}{(5^{2k}-1)} < \vert B_{2k} \vert , \, \, k \in \mathbb{N}.
\end{align}
It is easy to check that the inequalities (\ref{eqn2.2}) and (\ref{eqn2.3}) are refinements of the inequality (\ref{eqn1.14}). In Proposition \ref{prop2.1}, we also observe that the larger the value of $ m $, the sharper is the corresponding inequality.

\begin{remark} The statement of Proposition \ref{prop2.1} can be reformulated as follows: 

 If $ m $ is a non-negative integer, then 
\begin{align}\label{eqn2.4}
\frac{2 \cdot (2k)!}{(2\pi)^{2k}} \prod_{n=0}^m \left(\frac{p_n^{2k}}{p_n^{2k}-1}\right) < \vert B_{2k} \vert , 
\end{align}
where $k \in \mathbb{N} $ and $p_n \in \mathcal{P}_0.$  
\end{remark}
Here, (\ref{eqn2.4}) clearly generalizes the inequality (\ref{eqn1.7}). \\

The next Proposition shows that the lower bounds in Proposition \ref{prop2.1} are sharper than those in Proposition \ref{prop1.1}.
\begin{proposition}\label{prop2.2}
For $k \in \mathbb{N} $ and $p_n \in \mathcal{P},$ it is true that
$$ \prod_{n=1}^m \left(\frac{p_n^{2k}}{p_n^{2k}-1}\right) > \sum_{n=0}^m \frac{1}{(2n+1)^{2k}}, $$  
\end{proposition}
\begin{proof}
For $k \in \mathbb{N} $ and $p_n \in \mathcal{P}$, we write
$$ \prod_{n=1}^m \left(\frac{p_n^{2k}}{p_n^{2k}-1}\right) = \prod_{n=1}^m \left(\frac{1}{1-\frac{1}{p_n^{2k}}}\right), $$ i.e.,
$$\prod_{n=1}^m \left(\frac{p_n^{2k}}{p_n^{2k}-1}\right) = \left(\frac{1}{1-\frac{1}{3^{2k}}}\right) \cdot \left(\frac{1}{1-\frac{1}{5^{2k}}}\right) \cdot \left(\frac{1}{1-\frac{1}{7^{2k}}}\right) \cdot \left(\frac{1}{1-\frac{1}{11^{2k}}}\right) \cdots \left(\frac{1}{1-\frac{1}{p_m^{2k}}}\right).$$ Making use of $$ \frac{1}{1-x} = 1 + x + x^2 + x^3 + \cdots, \, \, \vert x \vert < 1, $$ we get
\begin{multline}\nonumber
\prod_{n=1}^m \left(\frac{p_n^{2k}}{p_n^{2k}-1}\right) = \left( 1 + \frac{1}{3^{2k}} + \frac{1}{9^{2k}} + \frac{1}{3^{6k}} + \cdots \right) \times \left( 1 + \frac{1}{5^{2k}} + \frac{1}{5^{4k}} + \frac{1}{5^{6k}} + \cdots \right) \times  \\ 
\left( 1 + \frac{1}{7^{2k}} + \frac{1}{7^{4k}} + \frac{1}{7^{6k}} + \cdots \right) \times \left( 1 + \frac{1}{11^{2k}} + \frac{1}{11^{4k}} + \frac{1}{11^{6k}} + \cdots \right) \times \\
\cdots \left( 1 + \frac{1}{p_m^{2k}} + \frac{1}{p_m^{4k}} + \frac{1}{p_m^{6k}} + \cdots \right).\\
\end{multline}
Thus 
\begin{align*}
\prod_{n=1}^m \left(\frac{p_n^{2k}}{p_n^{2k}-1}\right) = &\left(1 + \frac{1}{3^{2k}} + \frac{1}{5^{2k}} + \frac{1}{7^{2k}} + \frac{1}{9^{2k}} \cdots + \frac{1}{(2m+1)^{2k}} + \cdots + \frac{1}{p_m^{2k}}\right) \\
&+ \text{sum of numbers greater than zero} \\
&> 1 + \frac{1}{3^{2k}} + \frac{1}{5^{2k}} + \frac{1}{7^{2k}} + \frac{1}{9^{2k}} \cdots + \frac{1}{(2m+1)^{2k}} = \sum_{n=0}^m \frac{1}{(2n+1)^{2k}}.
\end{align*}
The proof is completed.
\end{proof}
Let $L_{2k} $ be the lower bound of $ \vert B_{2k} \vert $ established in (\ref{eqn2.1}). For $ m = 1, $ we list the approximate numerical values $ \vert B_{2k} \vert - L_{2k}$ in the below table.

{
\begin{table}[H]
\centering
\caption{ Numerical values of $ \vert B_{2k} \vert - L_{2k}$ for $ m = 1.$ }
\vspace{.5cm}
{\renewcommand{\arraystretch}{2}
{\small
\begin{tabular}{c|c|c|c|c|c|c|c|}
\cline{2-6}
& \multicolumn{5}{|c|}{$ m = 1 $}  \\ \hline
\multicolumn{1}{|c|}{$k$} &$ 1 $ &$ 2  $ &$ 3$ &$4 $ & $5$  \\\hline
\multicolumn{1}{|c|}{$\vert B_{2k} \vert - L_{2k}$} & $ 1.46... \times 10^{-2}$  &$ 7.15... \times 10^{-5} $  &$  1.74... \times 10^{-6} $  &$  9.13... \times 10^{-8}  $ & $ 8.02... \times 10^{-9}$   \\ \hline
\cline{2-6}
& \multicolumn{4}{|c|}{$ m = 1$}    \\ \hline
\multicolumn{1}{|c|}{$k$} &$6$ &$7$ &$ 8$ &$9 $ & $10$  \\
\hline
\multicolumn{1}{|c|}{$\vert B_{2k} \vert - L_{2k}$} & $1.05... \times 10^{-9}$  &$ 1.92... \times 10^{-10} $  &$ 4.66... \times 10^{-11} $  &$ 1.43... \times 10^{-11} $ & $5.11... \times 10^{-12}$   \\
\hline
\end{tabular}
}}
\label{tab1}
\end{table}
}

From these values, we infer that the lower bound in (\ref{eqn2.1}) is sufficiently sharp even for $ m=1.$ One should get sharper and sharper lower bounds for larger values of $m.$

\begin{corollary}\label{cor1}
Suppose that $ a $ is any real number and $ k \in \mathbb{N}. $ Then 
\begin{align}\label{eqn2.5}
\frac{2 \cdot (2k)!}{\pi^{2k} (2^{2k} - 1)} \frac{a^{2k}}{(a^{2k}-1)} < \vert B_{2k} \vert
\end{align}
if $ a \geq 3.$
\end{corollary}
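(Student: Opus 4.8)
The plan is to deduce this directly from the special case $m=1$ of Proposition \ref{prop2.1}, namely the inequality (\ref{eqn2.2}). That inequality already asserts
$$\frac{2\cdot(2k)!}{\pi^{2k}(2^{2k}-1)}\,\frac{3^{2k}}{3^{2k}-1} < \vert B_{2k}\vert,$$
so it suffices to show that, for every real $a\ge 3$ and every $k\in\mathbb N$, the extra factor $\dfrac{a^{2k}}{a^{2k}-1}$ does not exceed $\dfrac{3^{2k}}{3^{2k}-1}$. Chaining this comparison with (\ref{eqn2.2}) then yields (\ref{eqn2.5}).

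First I would rewrite, for $a>1$, $\dfrac{a^{2k}}{a^{2k}-1}=1+\dfrac{1}{a^{2k}-1}$, and regard the right-hand side as a function of the real variable $a$ on $(1,\infty)$. Its derivative equals $-\dfrac{2k\,a^{2k-1}}{(a^{2k}-1)^2}$, which is negative for all $a>1$ and all $k\ge 1$; hence $a\mapsto \dfrac{a^{2k}}{a^{2k}-1}$ is strictly decreasing on $(1,\infty)$. (Alternatively, one may argue purely algebraically: $a\ge 3$ forces $a^{2k}\ge 3^{2k}$, hence $a^{2k}-1\ge 3^{2k}-1>0$, hence $\dfrac{1}{a^{2k}-1}\le\dfrac{1}{3^{2k}-1}$.) In either case we obtain $\dfrac{a^{2k}}{a^{2k}-1}\le \dfrac{3^{2k}}{3^{2k}-1}$ whenever $a\ge 3$, with equality precisely when $a=3$.

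Combining the two displays, for $a\ge 3$ and $k\in\mathbb N$ we get
$$\frac{2\cdot(2k)!}{\pi^{2k}(2^{2k}-1)}\,\frac{a^{2k}}{a^{2k}-1}\le \frac{2\cdot(2k)!}{\pi^{2k}(2^{2k}-1)}\,\frac{3^{2k}}{3^{2k}-1}<\vert B_{2k}\vert,$$
which is exactly (\ref{eqn2.5}). There is no genuine analytic difficulty here; the only points meriting a little care are (a) restricting to $a>1$ so that all quantities involved are positive and the monotonicity argument is valid, and (b) noting that the final inequality is \emph{strict} — this is inherited from the strict inequality (\ref{eqn2.2}), and so it persists even in the boundary case $a=3$ where the comparison factor gives equality.
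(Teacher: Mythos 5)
Your proposal is correct and coincides with the paper's own proof: both deduce (\ref{eqn2.5}) from the $m=1$ case (\ref{eqn2.2}) of Proposition \ref{prop2.1} by observing that $a \geq 3$ gives $\frac{a^{2k}}{a^{2k}-1} \leq \frac{3^{2k}}{3^{2k}-1}$. Your extra care about monotonicity and strictness is fine but not needed beyond what the paper states.
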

\begin{proof}
For $ a \geq 3, $ it is obvious that
$$ \frac{a^{2k}}{(a^{2k}-1)} \leq \frac{3^{2k}}{(3^{2k}-1)}. $$ Then by (\ref{eqn2.2}), we get the inequality (\ref{eqn2.5}).
\end{proof}
\begin{remark}
Applying the same argument as in the proof of Corollary \ref{cor1}, for any real number $ a $ and $ m, k \in \mathbb{N}, $ we have 
\begin{align}\label{eqn2.6}
\frac{2 \cdot (2k)!}{\pi^{2k}(2^{2k}-1)}\frac{3^{2k}}{(3^{2k}-1)}\frac{5^{2k}}{(5^{2k}-1)} \cdots \frac{p_{m-1}^{2k}}{(p_{m-1}^{2k}-1)} \frac{a^{2k}}{(a^{2k}-1)} < \vert B_{2k} \vert
\end{align}
if $ a \geq p_m, $ where $ p_i \in \mathcal{P}.$
\end{remark}
Table \ref{tab1} shows that the inequality (\ref{eqn2.2}) is very sharp. It is of particular interest and it motivates us to ask the natural question: What can be the best possible constants $ \alpha $ and $ \beta $ such that the double inequality 
$$ \frac{2\cdot (2k)!}{\pi^{2k} (2^{2k}-1)}\frac{3^{2k}}{(3^{2k}-\alpha)} < \vert B_{2k} \vert < \frac{2\cdot (2k)!}{\pi^{2k} (2^{2k}-1)}\frac{3^{2k}}{(3^{2k}-\beta)}, $$
holds for $ k = 1, 2, 3, \cdots?$

The answer to the above question is provided in the following theorem which is our main finding.

\begin{theorem}\label{thm2.6}
The best possible constants $ \alpha $ and $ \beta $ satisfying the double inequality
\begin{align}\label{eqn2.7}
\frac{2\cdot (2k)!}{\pi^{2k} (2^{2k}-1)}\frac{3^{2k}}{(3^{2k}-\alpha)} < \vert B_{2k} \vert < \frac{2\cdot (2k)!}{\pi^{2k} (2^{2k}-1)}\frac{3^{2k}}{(3^{2k}-\beta)}, \, \, k \in \mathbb{N}
\end{align}
are $ 1 $ and $ 9\left(1 - \frac{8}{\pi^2}\right) \approx 1.704875 $ respectively.
\end{theorem}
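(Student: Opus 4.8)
The key identity is $|B_{2k}| = \frac{2\cdot(2k)!}{(2\pi)^{2k}}\zeta(2k)$, which rewrites the target double inequality, after clearing the common positive factor $\frac{2\cdot(2k)!}{\pi^{2k}(2^{2k}-1)}$ and the factor $3^{2k}$, as
\begin{align*}
\frac{1}{3^{2k}-\alpha} < \frac{\zeta(2k)(2^{2k}-1)}{2^{2k}\cdot 3^{2k}} < \frac{1}{3^{2k}-\beta}.
\end{align*}
Since all quantities involved are positive (note $3^{2k}-\alpha>0$ and $3^{2k}-\beta>0$ will need to be checked, but both $\alpha=1$ and $\beta\approx1.705$ are less than $9\le 3^{2k}$), this is equivalent to sandwiching the quantity
\begin{align*}
c_k := 3^{2k} - \frac{2^{2k}\cdot 3^{2k}}{\zeta(2k)(2^{2k}-1)} = 3^{2k}\left(1 - \frac{2^{2k}}{\zeta(2k)(2^{2k}-1)}\right),
\end{align*}
namely the double inequality in \eqref{eqn2.7} holds for all $k$ if and only if $\alpha \le \inf_k c_k$ and $\beta \ge \sup_k c_k$. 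So the whole problem reduces to analyzing the sequence $\{c_k\}_{k\ge 1}$: I must show it is, say, increasing, compute its limit as $k\to\infty$, and evaluate $c_1$.

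First I would compute the two endpoints. As $k\to\infty$, $\zeta(2k)\to 1$, $2^{2k}/(2^{2k}-1)\to 1$, so $\frac{2^{2k}}{\zeta(2k)(2^{2k}-1)}\to 1$ and the bracket $\to 0$; I need the sharper asymptotics to see the product with $3^{2k}$ converges. Writing $\zeta(2k) = 1 + 2^{-2k} + 3^{-2k} + O(4^{-2k})$ and $\frac{2^{2k}}{2^{2k}-1} = 1 + 2^{-2k} + O(4^{-2k})$, one gets $\zeta(2k)\cdot\frac{2^{2k}-1}{2^{2k}} = \zeta(2k)(1-2^{-2k}) = 1 + 3^{-2k} + O(4^{-2k})$ after the $2^{-2k}$ terms cancel; hence $1 - \frac{2^{2k}}{\zeta(2k)(2^{2k}-1)} = 1 - \frac{1}{1+3^{-2k}+O(4^{-2k})} = 3^{-2k} + O(4^{-2k})$, so $c_k = 3^{2k}\cdot(3^{-2k}+O(4^{-2k})) = 1 + O((3/4)^{2k}) \to 1$. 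That gives the limit. For the other endpoint, $c_1$: $\zeta(2) = \pi^2/6$, $2^{2}-1 = 3$, $2^2 = 4$, $3^2 = 9$, so $c_1 = 9\bigl(1 - \frac{4}{(\pi^2/6)\cdot 3}\bigr) = 9\bigl(1 - \frac{8}{\pi^2}\bigr) \approx 1.704875$, matching the claimed $\beta$.

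Then I would prove monotonicity: $\{c_k\}$ is strictly increasing, so $\inf_k c_k = \lim_k c_k = 1$ (not attained, hence the strict inequality on the left is fine with $\alpha = 1$, and $\alpha$ cannot be taken any larger) and $\sup_k c_k = c_1 = 9(1-8/\pi^2)$ (attained at $k=1$, so the right inequality must be non-strict in principle — but the theorem states it with strict $<$; I should double-check, and indeed with $\beta = c_1$ we get equality at $k=1$, so strictness fails at $k=1$ unless... wait, at $k=1$, $|B_2| = 1/6$ and the upper bound with $\beta=c_1$ equals exactly $1/6$; so the correct reading is that $\beta = c_1$ is the best, i.e. smallest, constant making the $\le$ version hold, equivalently no smaller $\beta$ works, and for $k\ge 2$ the inequality is strict — this subtlety I would address by noting $c_k < c_1$ for $k \ge 2$). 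To establish $c_{k+1} > c_k$, equivalently $3^{2k+2}(1 - u_{k+1}) > 3^{2k}(1-u_k)$ where $u_k := \frac{2^{2k}}{\zeta(2k)(2^{2k}-1)}$, i.e. $9(1-u_{k+1}) > 1 - u_k$, I would work with the series: $1 - u_k = 1 - \frac{1}{(1-2^{-2k})^{-1}\cdot(1-2^{-2k})\zeta(2k)}$... cleaner is $\frac{1}{u_k} = \zeta(2k)\cdot\frac{2^{2k}-1}{2^{2k}} = (1-2^{-2k})\zeta(2k) = \sum_{n\ge 1} n^{-2k} - \sum_{n\ge 1}(2n)^{-2k}\cdot\frac{2^{2k}}{2^{2k}} $... let me instead just use $(1-2^{-2k})\zeta(2k) = \sum_{n \text{ odd}} n^{-2k} = \sum_{j\ge 0}(2j+1)^{-2k} =: \lambda(2k)$ (the Dirichlet lambda function / odd zeta). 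Then $u_k = 1/\lambda(2k)$ and $c_k = 3^{2k}\bigl(1 - \frac{1}{\lambda(2k)}\bigr) = 3^{2k}\cdot\frac{\lambda(2k)-1}{\lambda(2k)}$. Since $\lambda(2k) = 1 + 3^{-2k} + 5^{-2k} + \cdots$, we have $\lambda(2k) - 1 = 3^{-2k} + 5^{-2k} + \cdots = 3^{-2k}\bigl(1 + (3/5)^{2k} + (3/7)^{2k} + \cdots\bigr)$, so
\begin{align*}
c_k = \frac{1 + (3/5)^{2k} + (3/7)^{2k} + (3/9)^{2k} + \cdots}{1 + 3^{-2k} + 5^{-2k} + 7^{-2k} + \cdots} = \frac{\sum_{j\ge 1}(3/(2j+1))^{2k}}{\sum_{j\ge 0}(1/(2j+1))^{2k}}.
\end{align*}
Now the numerator's leading term is $1$ and all further terms $(3/(2j+1))^{2k}$ with $j\ge 2$ decrease in $k$; the denominator is $1$ plus terms that also decrease in $k$. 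So as $k$ increases, numerator decreases toward $1$ and denominator decreases toward $1$ — the monotonicity of the ratio is not immediate and is the main obstacle. I expect to handle it by a term-by-term / pairing argument: write $c_k = \sum_{j\ge 1} w_j(k)\cdot r_j$ where... actually the clean route is to show $9\lambda(2k+2) - \lambda(2k+2) + 1 > 9\bigl(1 - \frac{1}{\lambda(2k)}\bigr)\cdot\frac{\lambda(2k+2)}{9}$... I would instead verify $c_{k+1} > c_k \iff 9\lambda(2k)\bigl(\lambda(2k+2)-1\bigr) > \lambda(2k+2)\bigl(\lambda(2k)-1\bigr)$ and expand both Dirichlet products as absolutely convergent double series $\sum_{i,j}\bigl((2i-1)(2j-1)\bigr)^{-2k}$ vs.\ $\sum \bigl((2i-1)(2j-1)\bigr)^{-(2k+2)}$ with suitable constant factors, then compare coefficient-by-coefficient after reindexing by the value $m = (2i-1)(2j-1)$ of the product. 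The inequality $9/m^{2k} \cdot (\text{stuff at } 2k+2)$ versus $(\text{stuff at } 2k)$ should reduce, for each fixed odd $m\ge 1$, to checking $9\cdot m^{-2} \ge$ (or $\le$) $1$ appropriately weighted, i.e.\ it ultimately rests on the elementary fact that among odd integers, $3$ is the smallest exceeding $1$, which forces the dominant correction in both numerator and denominator to be the $3^{-2k}$-scale term and makes the ratio move monotonically toward $1$. Once monotonicity is in hand, combining with the computed limit $1$ and value $c_1 = 9(1-8/\pi^2)$, and checking the positivity conditions $3^{2k} - \alpha > 0$, $3^{2k}-\beta>0$ (trivial since $3^{2k}\ge 9 > \beta$), finishes the proof that $\alpha = 1$ and $\beta = 9(1-8/\pi^2)$ are best possible.

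\textbf{Main obstacle.} The genuinely nontrivial step is the monotonicity of $c_k = \dfrac{\sum_{j\ge 1}\bigl(3/(2j+1)\bigr)^{2k}}{\sum_{j\ge 0}\bigl(1/(2j+1)\bigr)^{2k}}$ in $k$ — both numerator and denominator tend monotonically to $1$, so one needs a careful comparison (a quotient-of-series / Chebyshev-type rearrangement argument, or the coefficient-matching via the Dirichlet product expansion sketched above) rather than a one-line estimate. Everything else — rewriting via Euler's $\zeta$-formula, the two endpoint computations, and the positivity checks — is routine.
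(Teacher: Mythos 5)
Your reduction is sound and matches the paper's: via $|B_{2k}|=\frac{2\cdot(2k)!}{(2\pi)^{2k}}\zeta(2k)$ the problem becomes sandwiching $c_k=3^{2k}\bigl(1-\frac{1}{\lambda(2k)}\bigr)$ with $\lambda(s)=(1-2^{-s})\zeta(s)$, and your two endpoint computations ($c_k\to1$ and $c_1=9(1-8/\pi^2)$) are correct. But the proof has a genuine gap exactly where you flag it: the monotonicity of $c_k$ is never established, only sketched, and moreover you state it in the wrong direction. You write that $\{c_k\}$ is ``strictly increasing'' and set out to prove $c_{k+1}>c_k$, yet your own numbers show $c_1\approx1.705$ and $\lim_k c_k=1$, so the sequence must \emph{decrease}; your stated conclusions ($\inf=\lim=1$, $\sup=c_1$) are those of a decreasing sequence, so the displayed equivalence $9\lambda(2k)(\lambda(2k+2)-1)>\lambda(2k+2)(\lambda(2k)-1)$ has the inequality reversed. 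The correct target, after writing $a=\lambda(2k)-1$, $b=\lambda(2k+2)-1$, is $a-9b>8ab$, and this does not follow from the one-line bound $9b<a$; it needs the quantitative input that $a-9b$ is of order $5^{-2k}$ while $ab$ is of order $3^{-4k-2}$, or some equivalent comparison. Your proposed ``coefficient-matching via the Dirichlet product expansion'' is plausible but is precisely the nontrivial content of the theorem, and as written it is not a proof.

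For comparison, the paper closes this gap by a different and arguably cleaner device: it treats the continuous function $h(x)=3^x\bigl[1-\frac{1}{(1-2^{-x})\zeta(x)}\bigr]$ on $x\ge2$ (so $c_k=h(2k)$), differentiates, and uses two Euler identities --- the product expansion giving $(1-2^{-x})\zeta(x)-1=\sum_{p\ \mathrm{odd\ prime}}p^{-x}$ and the logarithmic derivative $\zeta'(x)/\zeta(x)=-\sum_{p}\frac{\ln p}{p^x-1}$ --- to reduce $h'(x)<0$ to the termwise inequality $\frac{1}{p^x}-\frac{(\ln p)/(\ln 3)}{p^x-1}<0$ for each odd prime $p$, which is immediate since $\ln p\ge\ln3$ and $p^x-1<p^x$. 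If you want to salvage your discrete route, you could instead import that same termwise idea, but as it stands your argument is incomplete. One point in your favor: you correctly observe that with $\beta=9(1-8/\pi^2)$ the upper inequality becomes an \emph{equality} at $k=1$ (both sides equal $1/6$), so the strict ``$<$'' in \eqref{eqn2.7} actually fails there and the statement should be read with ``$\le$'' on the right (or restricted to $k\ge2$); the paper's own proof, which takes $\beta=\lim_{x\to2^+}h(x)=h(2)$, silently has the same issue.
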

\begin{proof}
Utilizing the same relation
$$ \zeta(2k) = \frac{(2\pi)^{2k}}{2(2k)!} \vert B_{2k} \vert , \, \, k \in \mathbb{N} $$ as in the proof of Proposition \ref{prop2.1}, the double inequality (\ref{eqn2.7}) can be equivalently written as
$$ \frac{2^{2k}}{(2^{2k}-1)}\frac{3^{2k}}{(3^{2k}-\alpha)} < \zeta(2k) < \frac{2^{2k}}{(2^{2k}-1)}\frac{3^{2k}}{(3^{2k}-\beta)}, $$ i.e.,
$$ \alpha < 3^{2k}\left[1 - \frac{1}{\left(1-\frac{1}{2^{2k}}\right)\zeta(2k)}\right] < \beta , \, \, k \in \mathbb{N}. $$ 
We define $$ h(x) = 3^x \left[1 - \frac{1}{\left(1-\frac{1}{2^x}\right)\zeta(x)}\right], \, \, x = \left\lbrace 2, 4, 6, 8, \cdots \right\rbrace. $$ However, we prove the monotonicity of $ h(x) $ for $ x \in \mathbb{N} - \left\lbrace 1 \right\rbrace.$ Consider $ g(x) = h(x) - h(x+1).$ Then 
\begin{align*}
g(x) &= 3^x\left[1- \frac{1}{\left(1-\frac{1}{2^x}\right)\zeta(x)}\right] - 3^{x+1}\left[1- \frac{1}{\left(1-\frac{1}{2^{x+1}}\right)\zeta(x+1)}\right] := 3^x \cdot t(x),
\end{align*}
where
\begin{align*}
t(x) &= \frac{3}{\left(1-\frac{1}{2^{x+1}}\right)\zeta(x+1)} - \frac{1}{\left(1-\frac{1}{2^x}\right)\zeta(x)} - 2 \\
&= \frac{3\left(1-\frac{1}{2^x}\right)\zeta(x) - \left(1-\frac{1}{2^{x+1}}\right)\zeta(x+1) }{\left(1-\frac{1}{2^{x+1}}\right)\zeta(x+1) \cdot \left(1-\frac{1}{2^x}\right)\zeta(x)} - 2.
\end{align*}
Now using Euler's thinking \cite[p. 15]{gautschi}, we have $$ \left(1-\frac{1}{2^x}\right)\zeta(x) = 1 + \frac{1}{3^x} + \frac{1}{5^x} + \frac{1}{7^x} + \frac{1}{9^x} + \cdots = \sum_{n=0}^\infty \frac{1}{(2n+1)^x}.  $$
Therefore
\begin{align*}
t(x) &= \frac{3 \sum_{n=0}^\infty \left(\frac{1}{2n+1}\right)^x - \sum_{n=0}^\infty \left(\frac{1}{2n+1}\right)^{x+1}}{\left[\sum_{n=0}^\infty \left(\frac{1}{2n+1}\right)^{x+1}\right] \left[ \sum_{n=0}^\infty \left(\frac{1}{2n+1}\right)^x \right]} - 2 \\
&= \frac{2 \sum_{n=0}^\infty \left(\frac{3n+1}{2n+1}\right) \left(\frac{1}{2n+1}\right)^x}{\sum_{n=0}^\infty\sum_{l=0}^n  \left(\frac{1}{2l+1}\right)^{x+1} \left(\frac{1}{2n-2l+1}\right)^x } - 2 \\
&:= \frac{2 \sum a_n}{\sum b_n} - 2.
\end{align*} 
The actual terms of the series $ \sum b_n$ are given by
\begin{multline}\nonumber
\sum b_n = 1 + \frac{(4/3)}{3^x} + \frac{(6/5)}{5^x} + \frac{(8/7)}{7^x} + \frac{(13/9)}{9^x} + \frac{(12/11)}{11^x} + \frac{(14/13)}{13^x} \\
+ \frac{(24/15)}{15^x} + \frac{(18/17)}{17^x} + \frac{(20/19)}{19^x} + \frac{(32/21)}{21^x} + \cdots.
\end{multline}
Similarly, the terms of the series $ \sum a_n $ can be written as
\begin{multline}\nonumber
\sum a_n = 1 + \frac{(4/3)}{3^x} + \left[\frac{(6/5)}{5^x} + \frac{(1/5)}{5^x}\right] + \left[\frac{(8/7)}{7^x} + \frac{(2/7)}{7^x}\right] + \frac{(13/9)}{9^x} + \left[\frac{(12/11)}{11^x} + \frac{(4/11)}{11^x}\right] \\ + \left[\frac{(14/13)}{13^x} + \frac{(5/13)}{13^x}\right] + \left[\frac{(24/15)}{15^x} - \frac{(2/15)}{15^x}\right] + \left[\frac{(18/17)}{17^x} + \frac{(7/17)}{17^x}\right] \\
 + \left[\frac{(20/19)}{19^x} + \frac{(8/19)}{19^x}\right] + \left[\frac{(32/21)}{21^x} - \frac{(1/21)}{21^x}\right] + \cdots,
\end{multline}
i.e.,
\begin{align*}
\sum a_n = &\sum b_n \\
&+ \left[\frac{(1/5)}{5^x} + \frac{(2/7)}{7^x} + \frac{(4/11)}{11^x} + \frac{(5/13)}{13^x} - \frac{(2/15)}{15^x} + \frac{(7/17)}{17^x} + \frac{(8/19)}{19^x} - \frac{(1/21)}{21^x} + \cdots \right].
\end{align*}
From this, it is obvious that $ \sum a_n > \sum b_n. $ So we get $ t(x) > 0$ and hence $ g(x) > 0, $ which implies that $ h(x) $ is strictly decreasing for $ x \in \mathbb{N} - \left\lbrace 1 \right\rbrace.$ Thus, $ h(x) $ is strictly decreasing for $ x \in \left\lbrace 2, 4, 6, \cdots \right\rbrace.$ Consequently,
$ \alpha = \lim_{x \rightarrow \infty^-} h(x) = 1 $ and $ \beta = h(2) = 9\left(1-\frac{8}{\pi^2}\right). $ This completes the proof.
\end{proof}
\begin{remark}\label{remI}
 The lower bound in (\ref{eqn2.7}) is clearly stronger than that in (\ref{eqn1.9}). The upper bound in (\ref{eqn2.7}) is also stronger than that in (\ref{eqn1.9}). Because
 \begin{align*}\nonumber
&\frac{2\cdot (2k)!}{\pi^{2k} (2^{2k}-1)}\frac{3^{2k}}{(3^{2k}-\beta)} \leq \frac{2 \cdot (2k)!}{(2\pi)^{2k}} \frac{1}{1-2^{\delta - 2k}} \\
&\Longleftrightarrow \frac{1}{(2^{2k}-1)}\frac{3^{2k}}{(3^{2k}-\beta)} \leq \frac{1}{(2^{2k}-2^\delta)} \\
&\Longleftrightarrow \beta \cdot (2^{2k}-1) \leq (2^\delta - 1) \cdot 3^{2k}.
 \end{align*}
 The last inequality holds for all $ k \in \mathbb{N} $ due to 
 $$ \frac{\beta}{(2^\delta - 1)} = \frac{9\left(1-\frac{8}{\pi^2}\right)}{4 \cdot 2^{\frac{\ln (1-6/\pi^2)}{\ln 2}}-1} = \frac{9\left(1-\frac{8}{\pi^2}\right)}{4\left(1-\frac{6}{\pi^2}\right)-1} = 3.$$
\end{remark}

{

If we set
\begin{align*}
E_1(k) &= \frac{2 \cdot (2k)!}{\pi^{2k}(2^{2k}-1)}\frac{3^{2k}}{(3^{2k}-\alpha)} - \frac{2 \cdot (2k)!}{(2\pi)^{2k}(1-2^{-2k})} \\
 &= \frac{2 \cdot (2k)!}{\pi^{2k}(2^{2k}-1)} \left(\frac{3^{2k}}{3^{2k}-1} - 1 \right)
\end{align*} 
and 
\begin{align*}
E_2(k) &= \frac{2 \cdot (2k)!}{(2\pi)^{2k}} \frac{1}{1-2^{\delta - 2k}} - \frac{2\cdot (2k)!}{\pi^{2k} (2^{2k}-1)}\frac{3^{2k}}{(3^{2k}-\beta)} \\
&= \frac{2 \cdot (2k)!}{\pi^{2k}} \left( \frac{1}{(2^{2k}-2^\delta)} - \frac{3^{2k}}{(2^{2k}-1)(3^{2k}-\beta)}\right),
\end{align*}
then the validity of our claim in Remark \ref{remI} is also demonstrated in Figure \ref{fig1}.

\begin{figure}[H]
\centering
\includegraphics[width=10cm,height=7cm]{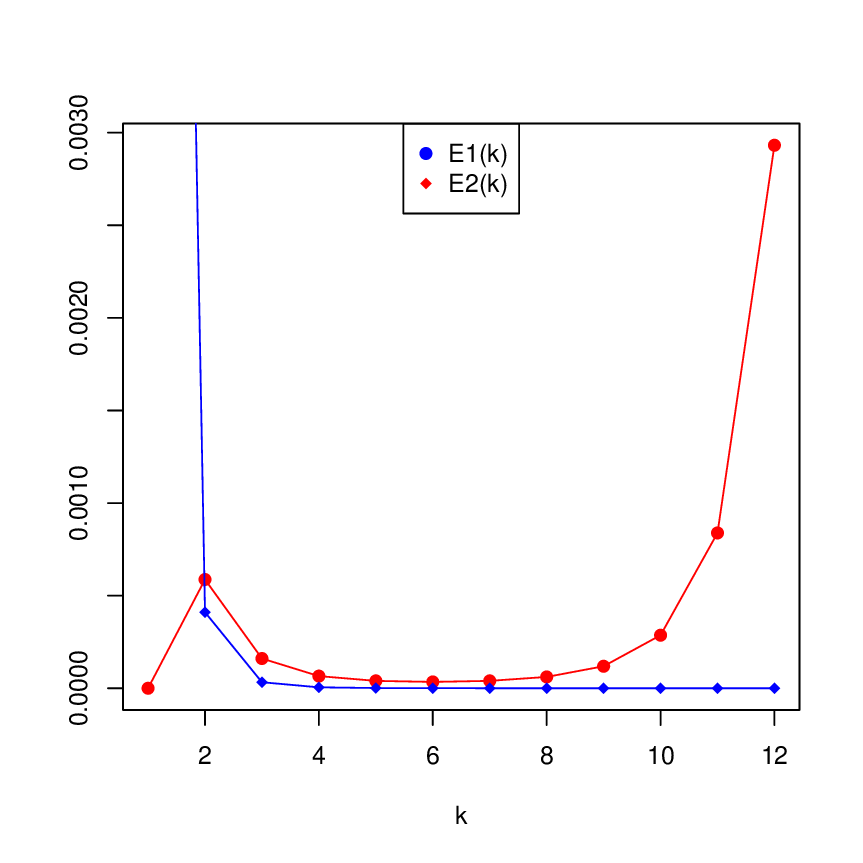}  
\caption{Plots of $ E_1(k) $ and $ E_2(k) $ showing the difference between corresponding lower and upper bounds of $ \vert B_{2k} \vert $ in (\ref{eqn1.9}) and (\ref{eqn2.7}) for $ k = 1, 2, 3, \cdots 12.$}\label{fig1}
\end{figure}

Lastly, it is not difficult to show or verify that an upper bound in (\ref{eqn2.7}) is stronger than that in (\ref{eqn1.10}).

\section{Conclusion}
In this paper, several bounds for the non-zero Bernoulli numbers have been obtained. The double inequality (\ref{eqn2.7}) is of particular interest and it improves all the existing bounds of non-zero Bernoulli numbers in the literature. Due to the appearance of the Bernoulli numbers in the series expansions of trigonometric and hyperbolic functions, the author believes that inequality (\ref{eqn2.7}) will play a crucial role in obtaining fine inequalities involving these functions. Further, it should be noted that a function $ h(x) $ in thye proof of Theorem \ref{thm2.6} can be shown strictly decreasing for all real numbers $ x \geq 2 $ by another method which may be applied to prove the following conjecture.\\

{\bf Conjecture 1.}\label{conj1}
The double inequality
\begin{align*}
 \prod_{n=1}^{m-1} \left(\frac{p_n^{2k}}{p_n^{2k}-1}\right) \times  \frac{p_m^{2k}}{(p_m^{2k} - \alpha')}  < \frac{\pi^{2k} (2^{2k}-1)}{2\cdot (2k)!} \vert B_{2k} \vert 
<  \prod_{n=1}^{m-1} \left(\frac{p_n^{2k}}{p_n^{2k}-1}\right) \times  \frac{p_m^{2k}}{(p_m^{2k} - \beta')}, 
\end{align*}
where $m, k \in \mathbb{N}, \, m \geq 2 $ and $p_n \in \mathcal{P}$ holds with the best possible constants  $ \alpha' = 1 $ and $ \beta' = p_m^2\left[1-\frac{8}{\pi^2} \prod_{n=1}^{m-1}\frac{p_n^2}{(p_n^2-1)} \right]. $ \\

{\bf Note.} This is the updated version of this manuscript on arXiv, and all previous versions should be disregarded.

\end{document}